\begin{document}

\newcommand{\mon}{\textbf{Mon}}
\newcommand{\ab}{\textbf{Ab}}
\newcommand{\semiadd}{\mathcal{E}}
\newcommand{\rep}{Rep(\mathcal{E})}
\newcommand{\repop}{Rep(\mathcal{E}^{op})}
\newcommand{\treillis}{\mathcal{L}}
\renewcommand{\thefootnote}{\fnsymbol{footnote}}
\newcommand{\closetwoheadrightarrow}[1]{\overset{\raisebox{-1ex}{\scriptsize $#1$}}{\twoheadrightarrow}}

\theoremstyle{plain}
\newtheorem{theoreme}{Theorem}[section]

\newtheorem{definition}[theoreme]{Definition}
\theoremstyle{proposition}
\newtheorem{proposition}[theoreme]{Proposition}
\newtheorem*{theoremeA}{Theorem A}
\newtheorem*{propositionA}{Proposition A}
\newtheorem{lemme}[theoreme]{Lemma}
\newtheorem{corollaire}[theoreme]{Corollary}
\theoremstyle{remark}
\newtheorem*{remarque}{Remark}
\newtheorem*{exemples}{Examples}

\author{Benachir El Allaoui\footnotemark}

\date{}

\title{(Co)homological vanishing for non-additive representations of a semi-additive category}

\maketitle

\footnotetext{* Université Sorbonne Paris Nord, Laboratoire Analyse, Géométrie et Applications, CNRS (UMR 7539), 93430, Villetaneuse, France.}

\selectlanguage{english}

\begin{abstract}
We show the vanishing of higher extension groups and torsion groups between linearisation of additive functors from a semi-additive category satisfying some conditions to a category of modules. In particular, we apply our results to the category of correspondence functors of Bouc-Thévenaz.
\end{abstract}

\selectlanguage{french}

\begin{abstract}
On montre l'annulation de groupes d'extensions supérieurs et de groupes de torsion supérieurs entre linéarisations de foncteurs additifs depuis une catégorie semi-additive vérifiant certaines hypothèses vers une catégorie de modules. En particulier, nous appliquons nos résultats à la catégorie des foncteurs de correspondances de Bouc-Thévenaz.
\end{abstract}

\selectlanguage{english}

\tableofcontents

\section{Introduction}

The category of correspondences (or category of relations) is the category whose objects are finite sets and morphisms are binary relations. Functor categories over the category of correspondences have been studied by Bouc-Thévenaz (see \cite{BT2}, \cite{BT1}), they established several finiteness properties and classified the simple functors. In this article, we compute extension groups between linearizations of additive functors from a semi-additive category in a more general framework than that of correspondences, using methods different from those employed by Bouc-Thévenaz. Recall that a semi-additive category is a category that admits finite products and finite coproducts which coincide (as in an additive category, but without assuming that the Hom-sets are abelian groups). We prove the following theorem for additive functors $A, B$ from a semi-additive category $\semiadd$ to the category of commutative monoids $\mon$ :

\begin{theoremeA}[Theorem 5.13]
    Let $k$ be a commutative ring with unit, if $\semiadd$ is $k$-trivial (see definition 5.2) and if $B$ takes finite values, then for all $n \geq 1$, $\textup{Ext}^n_{\mathcal{F}(\semiadd;k)}(k[A],k[B]) = 0$.
\end{theoremeA}

Let $\mathcal{C}$ denote the category of correspondences, $\mathcal{C}$ is $k$-trivial (see remark after definition 5.2). The category $\mathcal{C}$ is semi-additive, with direct sum given by the disjoint union of sets. $\mathcal{C}$ is generated (under direct sum) by the singleton $*$. Hence, any additive functor $A$ from $\mathcal{C}$ to $\mon$ is determined by its value $A(*)$, which carries a semilattice structure (i.e. every element in $A(*)$ is an idempotent, see beginning of section 4), and we obtain an equivalence between the category of additive functors $\textup{Add}(\mathcal{C};\mon)$  and the full subcategory of $\mon$ consisting of monoids such that the sum induces a semilattice structure.

If $T$ is a finite lattice, we can thus regard it as an additive functor from $\mathcal{C}$ to $\mon$, and we write $F_T := k[T]$. From Theorem A we derive the following proposition:

\begin{propositionA}
    Let $T, T'$ be lattices with $T'$ finite, for all $n \geq 1$, $\textup{Ext}^n_{\mathcal{F}(\mathcal{C};k)}(F_T,F_{T'}) = 0$.
\end{propositionA}

For the rest, fix a commutative ring with unit $k$. If $\semiadd$ is a semi-additive category, the sets of morphisms can be endowed canonically with a commutative monoid structure.
The category of functors from $\semiadd$ to $k$-modules, denoted
$\mathcal{F}(\semiadd;k)$, is a Grothendieck category with enough projectives and injectives, and one can perform homological algebra in it just like in a category of modules. A tensor product can be defined that generalizes the usual tensor product of modules and we can define Tor and Ext groups.

If $\semiadd$ is additive, all representable functors $\textup{Hom}_{\semiadd}(a,-)$ take values in the category of abelian groups $\ab$.
The category of additive functors from $\semiadd$ to $\ab$, denoted
$\textup{Add}(\semiadd;\ab)$, is abelian and has enough projectives.
Hence one can use the Dold–Kan correspondence to define a simplicial projective resolution of any functor $A \in \textup{Add}(\semiadd;\ab)$, that is, a simplicial object $P_{\bullet}$ of $s\textup{Add}(\semiadd;\ab)$ that is projective in each degree and induces a resolution of $A$ in $Ch_{\geq 0}(\textup{Add}(\semiadd;\ab))$.
After linearization, $k[P_{\bullet}]$ is again a simplicial projective resolution of $k[A]$. For linearization of additive functors, we may thus assume that projective resolutions are of the form
$k[P_{\bullet}]$.
This allows to reduce the computation of extension groups to the computation of the homology of simplicial abelian groups with coefficients in $k$, for which results are known (see Proposition 3.5 of \cite{DT}).

However, if $\semiadd$ is only semi-additive, the representables take values in the category of commutative monoids $\mon$, and we must work with the category $\textup{Add}(\semiadd;\mon)$, which is not abelian, so the Dold–Kan correspondence no longer applies. To address this issue, in Section 3 we define a model category structure on the simplicial category $s\textup{Add}(\semiadd;\mon)$ in order to construct simplicial projective resolutions of any functor $A$ in
$\textup{Add}(\semiadd;\mon)$. Similarly, we show that $k[A]$ admits a projective resolution of the form $k[P_{\bullet}]$. Thus, the computation of extension groups reduces to computing the homology of simplicial commutative monoids. \newline
In Section 4, we consider the case where $\forall a,b \in \semiadd$, the monoid $\textup{Hom}_{\semiadd}(a,b)$ is a semilattice, that is, it admits a partial order such that every pair of morphisms has a least upper bound given by the monoid law. In this case, we work with simplicial semilattices and show that, since in each degree we have a semigroup whose all elements are idempotent (see the beginning of Section 4), their homotopy groups in  positive degrees vanish; in other words, every simplicial semilattice is discrete. From this, we deduce the vanishing of the extension groups. \newline
In Section 5, we move to a more general framework of $k$-trivial semi-additive categories (see definition 5.2).  In particular, $\forall a,b \in \semiadd$, $\textup{Hom}_{\semiadd}(a,b)$ is a commutative inverse monoid.
Recall that a monoid is inverse if every element admits a unique pseudo-inverse; that is, for all $x$, there exists a unique $x^*$ such that $x = xx^*x$, $x^*=x^*xx^*$. Thus we are reduced to computing the homology of simplicial inverse monoids. 
To this end, we show that they satisfy a kind of Eckmann–Hilton principle but without units. More precisely, if $M_{\bullet}$ is a simplicial inverse semigroup and $e \in M_0$ is an idempotent, the law on $M_{\bullet}$ induces a new one on the homotopy groups $\pi_n(M_{\bullet},e)$ for $n \geq 1$, and we show that it coincides with the usual group law of homotopy groups. Using this, we prove that multiplication by an idempotent induces the identity on homotopy (and hence also on homology). This allows us to show that any simplicial inverse monoid is weakly equivalent to a disjoint union of connected simplicial inverse semigroups, and that group completion induces an isomorphism on homology for these connected simplicial inverse semigroups (in the monoid case, this is simply the classical group completion theorem, see \cite{Quillen2}, though here we don't have necessarily unit).
From all this, we directly deduce the vanishing of the extension groups. \newline
Finally, in the last section, we apply our results to the category of generalized correspondence functors introduced by C. Guillaume (see \cite{CG}). \newline
One can define, in the same way as in the additive case a notion of polynomial functor (for example by using cross-effects) on $\mathcal{F}(\semiadd;k)$ and the results of this article will be used to study the properties of the category $\mathcal{F}(\semiadd;k)$ and the structure of simple functors, when $\mathcal{F}(\semiadd;k)$ has no non-constant polynomial functors. \newline
Let us introduce some notation that we will use in the sequel of the article.
\\

\textbf{Notation}

\begin{itemize}
    \item Throughout, $k$ will denote a commutative unital ring;
    \item If $X$ is a set, $k[X]$ will denote the free $k$-module with basis $X$;
    \item If $X$ is a set, $\mathbb{N}[X]$ will denote the free commutative monoid generated by $X$;
    \item $\mon$ will denote the category of commutative monoids;
    \item If $k$ is a field, we denote by $V^\vee$ the dual of a vector space $V$ over $k$;
    \item If $\mathcal{C}$ is a category, we denote by $\mathcal{C}(x,y)$ the set of morphisms from $x$ to $y$;
    \item If $\mathcal{C}$ is a category, we denote by $s\mathcal{C}$ the category of simplicial objects in $\mathcal{C}$;
    \item If $M$ is a monoid, we denote by $M^\times$ the group of its invertible elements;
    \item If $M$ is a monoid, we denote by $k[M]$ the monoid algebra;
    \item If $M$ is a monoid, we denote by $E(M)$ the set of its idempotents;
    \item If $M$ is a monoid, we denote by $M^{+}$ its group completion.
\end{itemize}

\section{Background on functor categories}

Let $\mathcal{C}$ be a small category, the category of functors $\mathcal{F}(\mathcal{C};k)$ from $\mathcal{C}$ to the category of $k$-modules is complete and cocomplete, limits and colimits are computed pointwise. The category $\mathcal{F}(\mathcal{C};k)$ is abelian and filtered colimits are exact. The functors $P_a^{\mathcal{C}} := k[\mathcal{C}(a,-)]$ defined for all $a \in \mathcal{C}$ form a family of projective generators of $\mathcal{F}(\mathcal{C};k)$. Exactness is detected pointwise, i.e., a sequence $0 \xrightarrow{} F \xrightarrow{} G \xrightarrow{} H \xrightarrow{} 0$ in $\mathcal{F}(\mathcal{C};k)$ is exact if and only if, $\forall x \in \mathcal{C}$, $0 \xrightarrow{} F(x) \xrightarrow{} G(x) \xrightarrow{} H(x) \xrightarrow{} 0$ is exact. The category $\mathcal{F}(\mathcal{C};k)$ has enough projectives, and one can perform homological algebra in it just as in a module category.
We can also define a tensor product generalizing the usual tensor product of modules,
\begin{equation*}
    - \otimes_{\mathcal{C}} - : \mathcal{F}(\mathcal{C}^{op};k) \times \mathcal{F}(\mathcal{C};k) \xrightarrow{} k-\textup{Mod}
\end{equation*}
defined by :
\begin{equation*}
    A \otimes_{\mathcal{C}} B := \int^{c \in \mathcal{C}} A(c) \otimes_{k} B(c)
\end{equation*}
$- \otimes_{\mathcal{C}} -$ commutes with colimits in each variable, in particular, it is right exact, and one can therefore compute its left derived functors, denoted Tor$_*^{\mathcal{C}}(-,-)$. 
\newline
$- \otimes_{\mathcal{C}} -$ is a balanced bifunctor, and hence deriving it with respect to the first or the second variable yields (up to isomorphism) the same result.
\newline
Moreover, for all $ A \in \mathcal{F}(\mathcal{C}^{op};k)$, $B \in \mathcal{F}(\mathcal{C};k)$ and $M \in k-\textup{Mod}$, there is a natural isomorphism
\begin{equation*}
    \textup{Hom}_{k}(A \otimes_{\mathcal{C}} B, M) \cong \textup{Hom}_{\mathcal{F}(\mathcal{C};k)}(B, \textup{Hom}_k(A, M))
\end{equation*}
And for any object $a$ of $\mathcal{C}$, we have natural isomorphisms,
\begin{center}
    $P_a^{\mathcal{C}^{op}} \otimes_{\mathcal{C}} B \cong B(a)$; $A \otimes_{\mathcal{C}} P_a^{\mathcal{C}} \cong A(a)$.
\end{center}
We can also define $- \otimes_{\mathcal{C}} -$ equivalently using the above isomorphisms or by specifying its effect on the projectives generators and the fact that it commutes with colimits in each variable.
\newline
We now recall the definition of semi-additive categories.
\begin{definition}
    A category $\semiadd$ is semi-additive if it has finite products and coproducts and if, for every finite set $I$ and every family of objects $(x_i)_{i \in I}$ of $\semiadd$, the canonical morphism $\coprod_{i \in I} x_i \to \prod_{i \in I} x_i$ is an isomorphism. In this case, the sets of morphisms of $\semiadd$ can be canonically endowed with a commutative monoid structure.
\end{definition}
Throughout this paper, \textbf{$\semiadd$ denotes a semi-additive category}. \newline
We write $\rep$ for the category of additive functors, that is, functors commuting with finite direct sums, from $\semiadd$ to the category of commutative monoids $\mon$. A family of projective generators of $\rep$ is given by the representable functors $\semiadd(x,-)$. We can similarly define a tensor product on $\rep$, but first we recall the definition of the tensor product of commutative monoids.

\begin{definition}
 Let $A,B$ be two commutative monoids, their tensor product $A \otimes_{\mathbb{N}} B$ is defined as the quotient of $\mathbb{N}[A \times B]$ by the relations :
 \begin{itemize}
     \item $(a + a',b) \sim  (a,b) + (a',b)$
     \item $(a,b + b') \sim (a,b) + (a,b')$
     \item $(0,b) \sim (0,0)$
     \item $(a,0) \sim (0,0)$
 \end{itemize}
\end{definition}

As in the case of the tensor product over a commutative ring, it is commutative and associative up to isomorphism.
\newline
If $A \in \repop$, $B \in \rep$, we define :
\begin{equation*}
    A \otimes_{\semiadd} B := \int^{c \in \semiadd} A(c) \otimes_{\mathbb{N}} B(c)
\end{equation*}
This tensor product also commutes with colimits in each variable. Moreover, for every object $a$ of $\semiadd$, there are natural isomorphisms,
\begin{center}
    $\semiadd(-,a) \otimes_{\semiadd} B \cong B(a)$; $A \otimes_{\semiadd} \semiadd(a,-) \cong A(a)$.
\end{center}
A functor $A \in \rep$ is finitely generated, if and only if, it is a quotient of a representable functor $\semiadd(x,-)$ for some $x \in \semiadd$.

\section{Model structure on $s\rep$}

A first tool that we will need later on is a model structure on the category $s\rep$.
To compute torsion groups, we will need to use projective resolutions of functors in $\rep$, and we will use the category $s\rep$ to define them.

Since $\rep$ is complete and cocomplete, we can endow $s\rep$ with the structure of a simplicial category by defining, for two simplicial objects $A,B \in s\rep$, the mapping space $\bold{Hom}_{s\rep}(A,B)_n = \textup{Hom}_{s\rep}(A \otimes \Delta^n_{\bullet}, B)$ where $A \otimes \Delta^n_{\bullet} := \underset{\Delta^n_{\bullet}}{\coprod} A$ and $\Delta^n_{\bullet}$ is the standard $n$-simplex (see Section 4 of \cite{GJ}).
Let $f \in s\rep$, we say that $f$ is a weak equivalence (resp. fibration) if for every object $x$ of $\semiadd$, the morphism $\bold{Hom}_{s\rep}(\semiadd(x,-),f)$ is a weak equivalence (resp. fibration) in $sSet$ with respect to the Quillen model structure. The cofibrations are defined by the lifting property. For every $x \in \semiadd$, a direct application of the Yoneda lemma shows that $\textup{Hom}_{\rep}(\semiadd(x,-), -)$ commutes with filtered colimits.
Thus, $\{\semiadd(x,-)\}_{x \in \semiadd}$ form a system of small projective generators, and therefore, by Theorem 6.9 of \cite{GJ} (originally proved by Quillen in \cite{Quillen}, but in the case of a single small object), the cofibrations, fibrations, and weak equivalences defined above endow $s\rep$ with a model category structure.

\begin{definition}
    Let $B \in \rep$, we denote by $B_\bullet \in s\rep$ the constant simplicial object at $B$. A simplicial projective resolution of $B$ is a simplicial object $P_\bullet$ such that $\forall n \geq 0$, $P_n$ is projective, together with a morphism $P_\bullet \xrightarrow{} B_\bullet$ which is a weak equivalence.
\end{definition}

Simplicial projective resolutions correspond to cofibrant replacements for the model structure on $s\rep$ that we have just defined.
Hence, we immediately obtain the following lemma :

\begin{lemme}
    Let $B \in \rep$, then $B$ admits a simplicial projective resolution.
\end{lemme}

\begin{proposition}
    Let $B \in \rep$ and $P_\bullet \xrightarrow{\sim} B_\bullet$ be a simplicial projective resolution. Then $k[P_\bullet]$ is a projective resolution of $k[B]$ in $\mathcal{F}(\semiadd;k)$.
\end{proposition}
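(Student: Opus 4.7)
The plan is to decompose the claim into two independent verifications: (a) each $k[P_n]$ is projective in $\mathcal{F}(\semiadd;k)$, and (b) the augmented simplicial morphism $k[P_\bullet]\to k[B]_\bullet$ is a weak equivalence in $s\mathcal{F}(\semiadd;k)$ — equivalently, its associated normalised chain complex is acyclic in positive degrees with $H_0\cong k[B]$.

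For (b), the definition of weak equivalences in $s\rep$ gives that $P_\bullet(x)\to B(x)$ is a weak equivalence of underlying simplicial sets for each $x\in\semiadd$. The degreewise free $k$-module functor $k[-]\colon s\textup{Set}\to s(k\textup{-Mod})$ preserves weak equivalences, since the normalised chain complex associated to $k[X_\bullet]$ computes the singular homology $H_*(|X_\bullet|;k)$, which is homotopy invariant. Weak equivalences in $s\mathcal{F}(\semiadd;k)$ are detected pointwise and correspond via Dold--Kan to quasi-isomorphisms of the associated chain complexes, yielding (b).

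For (a), I would choose $P_\bullet$ to be a functorial cofibrant replacement — for instance the cotriple resolution of $B$ attached to the free/forgetful adjunction between $\rep$ and $\textup{Ob}(\semiadd)$-graded sets — so that each $P_n$ is a coproduct of representables $h_x^\semiadd$. The key computation combines the semi-additivity of $\semiadd$ (giving $\semiadd(\oplus_i x_i,y)\cong \prod_i\semiadd(x_i,y)$) with the fact that finite coproducts and finite products coincide in $\mon$, yielding
\[
  k\bigl[\oplus_{i=1}^n h^\semiadd_{x_i}\bigr](y) \;=\; k\bigl[\oplus_i \semiadd(x_i,y)\bigr] \;=\; k\bigl[\semiadd(\oplus_i x_i, y)\bigr] \;=\; P^\semiadd_{\oplus_i x_i}(y).
\]
Thus $k[-]$ sends a finite coproduct of representables in $\rep$ to a representable — and hence projective — object of $\mathcal{F}(\semiadd;k)$. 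For an arbitrary $P_n$ one passes to the filtered colimit over finite subcoproducts, which $k[-]$ preserves.

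The main subtlety lies in (a): the functor $k[-]\colon\mon\to k\textup{-Mod}$ is \emph{not} a left adjoint — one has $k[M\oplus N]\cong k[M]\otimes_k k[N]\neq k[M]\oplus k[N]$ — so it fails to preserve coproducts in the naive sense, and nothing immediate guarantees that the image of a projective in $\rep$ is projective in $\mathcal{F}(\semiadd;k)$. The semi-additivity identity above is precisely the mechanism that rescues the argument, turning the would-be tensor product into a representable, and hence a projective generator of $\mathcal{F}(\semiadd;k)$.
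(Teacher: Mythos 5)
Your step (b) is, in substance, the paper's entire proof: the paper first identifies $\mathbf{Hom}_{s\rep}(h_x^{\semiadd},P_\bullet)$ with $P_\bullet(x)$ (a Yoneda-type computation that you use implicitly when you say the definition of weak equivalences ``gives'' that $P_\bullet(x)\to B(x)$ is one), then linearises to get a pointwise quasi-isomorphism and concludes by the fact that exactness in $\mathcal{F}(\semiadd;k)$ is tested objectwise. So on the resolution part you and the paper take the same route.

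Where you diverge is part (a), which the paper's proof does not address at all: it only records that each $k[P_n(x)]$ is a free $k$-module and never argues that $k[P_n]$ is projective as an object of $\mathcal{F}(\semiadd;k)$. You are right to isolate this point, and your diagnosis is the correct one: $k[-]$ turns $\oplus$ into $\otimes_k$, so it does not preserve coproducts, and the identity $k[\oplus_{i=1}^n h^{\semiadd}_{x_i}] \cong P^{\semiadd}_{x_1\oplus\dots\oplus x_n}$ coming from semi-additivity is exactly what rescues the finite case. Two caveats, however. First, the proposition is stated for an \emph{arbitrary} simplicial projective resolution, so you cannot simply choose the cotriple resolution; this is harmless, since by Definition 3.1 each $P_n$ is projective in $\rep$, hence a retract of a coproduct of representables, and $k[-]$ preserves retracts. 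Second, and more seriously, your last step does not close the infinite case: a filtered colimit of projectives is flat but not projective in general, so writing $k[\bigoplus_{i\in I} h^{\semiadd}_{x_i}]$ as $\operatorname{colim}_{J\subset I} P^{\semiadd}_{\oplus_{j\in J} x_j}$ over finite $J$ does not by itself yield projectivity. You would need a supplementary argument (for instance, that the transition maps are split monomorphisms, together with a Kaplansky-type decomposition along a well-ordering of $I$), or else the observation that for everything the paper actually does with this proposition --- computing $\operatorname{Tor}$ as $H_*(k[A]\otimes_{\semiadd} k[P_\bullet])$ --- a degreewise flat, $\otimes_{\semiadd}$-acyclic resolution suffices, and flatness does pass to filtered colimits. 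As written, your (a) therefore has a genuine gap at that final step, but it is a gap on a point that the paper's own proof leaves entirely implicit.
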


\begin{proof}
    Let $x \in \semiadd$, the isomorphism $\textup{Hom}_{s\rep}(\semiadd(x,-) \otimes \Delta^n_{\bullet},P_\bullet) \cong P_n(x)$, which sends a morphism $\alpha$ to $\alpha_{01...n,x}(Id_x)$ where $\alpha_{01...n}$ is the component indexed by the $n$-simplex $01...n$ of $\Delta^n_{\bullet}$ induces an isomorphism of simplicial objects $\bold{Hom}_{s\rep}(\semiadd(x,-),P_\bullet) \cong P_\bullet(x)$.
    We thus obtain a weak equivalence $P_\bullet(x) \xrightarrow{\sim} B_\bullet(x)$, and hence, after linearization, a quasi-isomorphism $k[P_\bullet(x)] \xrightarrow{\sim} k[B_\bullet(x)]$.
    Therefore, $k[P_\bullet(x)]$ is a projective resolution of $k[B(x)]$, and since exactness is detected pointwise, $k[P_\bullet]$ is indeed a projective resolution of $k[B]$.
\end{proof}

The above results will allow us, when computing derived functors, to consider projective resolutions in $\mathcal{F}(\semiadd;k)$ that are obtained by linearizing projective resolutions in $\rep$.
This additional structure will (under certain assumptions on the category $\semiadd$) enable us to compute them.

The following proposition shows that the linearization functor $k[-]$ commutes with the tensor product of additive functors.

\begin{proposition}
    Let $A \in \repop$, $B \in \rep$, there exists a natural isomorphism in $A$ and $B$,
    \begin{equation*}
    k[A] \otimes_{\semiadd} k[B] \cong k[A \otimes_{\semiadd} B]
    \end{equation*}
\end{proposition}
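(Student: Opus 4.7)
The plan is to reduce the isomorphism to two ingredients about the linearization functor $k[-] \colon \mon \to k-\textup{Mod}$, and then combine them using the coend descriptions of both tensor products. The first ingredient is that $k[-]$, concretely realized as $M \mapsto k \otimes_{\mathbb{N}} M$ (with $k$ seen as a commutative monoid via addition), is the left adjoint to the forgetful functor sending a $k$-module to its underlying commutative monoid: a monoid morphism $M \to V$ into the underlying monoid of a $k$-module $V$ extends uniquely to a $k$-linear map $k[M] \to V$. Consequently $k[-]$ preserves all colimits, and in particular coends.

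The second ingredient is the following pointwise identity: for every pair of commutative monoids $M, N$, there is a natural isomorphism
\begin{equation*}
k[M] \otimes_k k[N] \;\cong\; k[M \otimes_{\mathbb{N}} N].
\end{equation*}
I would verify this by comparing universal properties. A $k$-linear map $k[M] \otimes_k k[N] \to V$ is the same as a $k$-bilinear map $k[M] \times k[N] \to V$; applying the adjunction $k[-] \dashv \textup{oubli}$ in each variable separately turns this datum into a map $M \times N \to V$ that is a monoid morphism in each variable, i.e. an $\mathbb{N}$-bilinear map in the sense of the four relations of Définition 2.2. The universal property of $M \otimes_{\mathbb{N}} N$, together with one final use of the adjunction, then identifies such data with a $k$-linear map $k[M \otimes_{\mathbb{N}} N] \to V$. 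Naturality in $(M, N)$ is automatic.

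Combining the two ingredients, the proposition follows from the chain of natural isomorphisms
\begin{align*}
k[A \otimes_{\semiadd} B]
&= k\left[\int^{c:\semiadd} A(c) \otimes_{\mathbb{N}} B(c)\right] \\
&\cong \int^{c:\semiadd} k[A(c) \otimes_{\mathbb{N}} B(c)] \\
&\cong \int^{c:\semiadd} k[A(c)] \otimes_k k[B(c)] \\
&= k[A] \otimes_{\semiadd} k[B],
\end{align*}
where the first isomorphism uses that $k[-]$ commutes with the coend (being a left adjoint) and the second applies the pointwise isomorphism objectwise inside the coend; naturality in $A$ and $B$ is inherited from the naturality of each step.

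I expect no serious obstacle: the only non-formal piece is the pointwise identity, which is really the assertion that $k[-]$ is a strong symmetric monoidal left adjoint from $(\mon, \otimes_{\mathbb{N}})$ to $(k-\textup{Mod}, \otimes_k)$ and reduces to a routine comparison of universal properties. The only mildly delicate point is making sure that the coends in $\mon$ and in $k-\textup{Mod}$ are both honest colimits in their (cocomplete) ambient categories, so that the left-adjoint preservation argument applies verbatim.
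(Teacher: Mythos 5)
Your argument rests on identifying the linearization $k[-]$ with the base-change functor $M \mapsto k \otimes_{\mathbb{N}} M$, and that identification is wrong. In this paper $k[M]$ is the monoid algebra, i.e.\ the \emph{free $k$-module on the underlying set of $M$} (see the Notations, and e.g.\ Proposition 4.2, where $k[A(x)]$ has rank $|A(x)|$); by contrast $k \otimes_{\mathbb{N}} M \cong M^{+} \otimes_{\mathbb{Z}} k$, since $k$ is already a group under addition --- a functor that is essentially zero in the $k$-trivial situations the paper is interested in. Both of your ingredients fail for the actual functor. First, $k[-] \colon \mon \to k-\textup{Mod}$ factors through the forgetful functor to sets, which is a right adjoint; it is not cocontinuous and does not commute with coends. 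It already fails on binary coproducts: $k[M \oplus N] \cong k[M] \otimes_k k[N]$, not $k[M] \oplus k[N]$, and the coend is built from exactly such coproducts. (What is true, and what the paper's proof actually exploits, is preservation of filtered colimits and of the reflexive coequalizers computing $\pi_0$ of simplicial objects --- a much weaker, non-formal statement.) Second, the pointwise identity is false: for the correct $k[-]$ one has $k[M] \otimes_k k[N] \cong k[M \times N]$, the free module on the product set. Taking $M = N = \{0,1\}$ with idempotent addition, $M \otimes_{\mathbb{N}} N$ has two elements, so $k[M \otimes_{\mathbb{N}} N]$ has rank $2$ while $k[M] \otimes_k k[N]$ has rank $4$.

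The genuine content of the proposition is precisely the point your formal argument assumes away: one must show that the $k$-linear bilinearity relations imposed in the coend $\int^{c} k[A(c)] \otimes_k k[B(c)]$ cut out the same quotient as first forming the monoid-theoretic coend $A \otimes_{\semiadd} B$ and then taking the free module on the result. The paper does this by writing down the explicit comparison map $[x] \otimes [y] \mapsto [x \otimes y]$, checking it is an isomorphism when $B$ is representable (both sides compute $k[A(x)]$), extending to finite-type projectives by retracts, to all projectives by filtered colimits, and finally to arbitrary $B$ by a simplicial projective resolution together with the compatibility of both sides with the reflexive coequalizer computing $\pi_0$. Some such dévissage is unavoidable: a purely monoidal/left-adjoint argument cannot work here because the premises it requires are false for the linearization functor.
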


\begin{proof}
We have a morphism
\begin{equation*}
\alpha : k[A] \otimes_{\semiadd} k[B] \xrightarrow{} k[A \otimes_{\semiadd} B]
\end{equation*}
sending the class of an element $[x] \otimes [y]$ to the element $[x \otimes y]$, where $x \in A(i,i)$, $y \in B(i,i)$, which is natural in $A$ and $B$. Let us show that this is an isomorphism.
\newline
If $B = \semiadd(x,-)$ is a representable, we have the following commutative diagram :
\[
        \begin{tikzcd}
        k[A] \otimes_{\semiadd} k[B] \arrow{r} \arrow{d} & k[A \otimes_{\semiadd} B] \arrow{d} \\
        k[A(x)] \arrow[equal]{r} & k[A(x)]
        \end{tikzcd}
\]
where the first horizontal morphism is the one defined above, and both vertical morphisms are isomorphisms.
\newline
If $B$ is finitely generated projective, it is a retract of a representable $\semiadd(x,-)$, and we thus have the following commutative diagram :
\[
        \begin{tikzcd}
        k[A] \otimes_{\semiadd} k[B] \arrow{r} \arrow[hookrightarrow]{d} & k[A \otimes_{\semiadd} B] \arrow[hookrightarrow]{d} \\
        k[A] \otimes_{\semiadd} k[\semiadd(x,-)] \arrow["\cong"]{r} \arrow[twoheadrightarrow]{d} & k[A \otimes_{\mathcal{E}} \semiadd(x,-)] \arrow[twoheadrightarrow]{d} \\ 
        k[A] \otimes_{\semiadd} k[B] \arrow{r} & k[A \otimes_{\semiadd} B]
        \end{tikzcd}
\]
This shows that $\alpha$ is both an epimorphism and a monomorphism.
\newline
If $B$ is projective, it can be written as the filtered colimit of its finitely generated projective subfunctors, and since the functors $k[A] \otimes_{\semiadd} -$ and $k[A \otimes_{\semiadd} -]$ commute with filtered colimits, it follows again that $\alpha$ is an isomorphism.
\newline
If $B$ is arbitrary, let $P_{\bullet} \xrightarrow{\sim} B_{\bullet}$ be a simplicial projective resolution. We have the following commutative diagram :
\[
        \begin{tikzcd}
        k[A] \otimes_{\semiadd} k[P_{\bullet}] \arrow{r} \arrow{d} & k[A \otimes_{\semiadd} P_{\bullet}] \arrow{d} \\
        k[A] \otimes_{\semiadd} k[B_{\bullet}] \arrow{r} & k[A \otimes_{\semiadd} B_{\bullet}]
        \end{tikzcd}
\]
The first horizontal arrow is an isomorphism of simplicial $k$-modules by what precedes. Both vertical arrows induce isomorphisms on $\pi_0$ because $k[A] \otimes_{\semiadd} -$ and $k[A \otimes_{\semiadd} -]$ commute with coequalizers. Therefore, the second horizontal arrow also induces an isomorphism on $\pi_0$, i.e., $\alpha$ is an isomorphism.
\end{proof}

To show that $k[-]$ also commutes with Hom, we need the following lemma.

\begin{lemme}
Let $A,M \in \textup{\mon}$ and $\pi : M \xrightarrow{} A$ a surjection. Then we have the following exact sequences $k[(M \times_{\pi} M)] \xrightarrow{} k[M] \xrightarrow{} k[A] \xrightarrow{} 0$ and $0 \xrightarrow{} k[\textup{Hom}_{\textup{\mon}}(A,B)] \xrightarrow{} k[\textup{Hom}_{\textup{\mon}}(M,B)] \xrightarrow{} k[\textup{Hom}_{\textup{\mon}}(M \times_{\pi} M,B)]$ for all $B \in \textup{\mon}$.
\end{lemme}

\begin{proof}
Let $X_\bullet \in sSet$ such that for all $n \geq 0$, $X_n = M \times_{\pi} \dots \times_{\pi} M = \{(x_0,...,x_n) \in M^{n+1} | \pi(x_i) = \pi(x_j), \forall i,j\}$ where $d_i(x_0,...,x_n) = (x_0,...,\hat{x_i},...,x_n)$ (we omit the $i^{th}$ coordinate) and $s_j(x_0,...,x_n) = (x_0,...,x_j,x_j,...,x_n)$.
Let $s : A \xrightarrow{} M$ be a set-theoretic section of $\pi$. For all $n \geq 0$ and $i \in \{0,...,n\}$, define $h_i^n : X_n \rightarrow{} X_{n+1}$ sending $(m_0,...,m_n)$ to $(s\circ\pi(m_0),...,s\circ\pi(m_i),m_i,...,m_n)$.
The maps $h_i^n$ form a simplicial homotopy between $id_{X_\bullet}$ and the constant map $(m_0,..,m_n) \mapsto (s\circ \pi(m_0),...,s\circ \pi(m_0))$.
Hence, the canonical morphism $X_\bullet \xrightarrow{} \pi_0(X)_\bullet \cong A_\bullet$ is a homotopy equivalence.
Dually, the morphisms $\textup{Hom}_{\mon}(h_i^n,B)$ form a cosimplicial homotopy (see \cite{Meyer}, Definition 2.1) between the identity and a constant map.
Therefore, $\textup{Hom}_{\mon}(X_\bullet,B) \xrightarrow{} \textup{Hom}_{\mon}(A,B)_\bullet$ is a homotopy equivalence.
After linearization, we obtain that $k[X_\bullet]$ is a resolution of $k[A]$, and $k[\textup{Hom}_{\mon}(X_\bullet,B)]$ is a resolution of $k[\textup{Hom}_{\mon}(A,B)]$.
The beginning of these two resolutions gives the exact sequences of the statement.
\end{proof}

\begin{remarque}
The exact sequences are moreover functorial, so we obtain the same result when replacing $A,M,B$ by functors in $\rep$.
\end{remarque}

We thus deduce the following proposition.

\begin{proposition}
Let $A,B \in \rep$, the canonical morphism
    \begin{align*}
        \Theta_{A,B} : k[\textup{Hom}_{Rep(\semiadd)}(A,B)] \xrightarrow{} \textup{Hom}_{\mathcal{F}(\semiadd;k)}(k[A],k[B])
    \end{align*}
is injective. And if $A$ is finitely generated, it is bijective.
\end{proposition}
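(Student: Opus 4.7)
L'idée est d'établir séparément l'injectivité de $\Theta_{A,B}$ pour $A,B$ quelconques dans $\rep$, puis la bijectivité lorsque $A$ est de type fini.

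Pour l'injectivité, je procéderais comme suit. Soient $f_1, \dots, f_n \in \textup{Hom}_{\rep}(A,B)$ deux à deux distincts et supposons $\sum_i \lambda_i k[f_i] = 0$. Il suffit d'exhiber un objet $x \in \semiadd$ et un élément $a \in A(x)$ tels que les valeurs $f_i(x)(a) \in B(x)$ soient deux à deux distinctes, car l'égalité $\sum_i \lambda_i [f_i(x)(a)] = 0$ dans $k[B(x)]$ forcerait alors chaque $\lambda_i$ à s'annuler. C'est ici qu'interviennent simultanément l'additivité de $A, B$ et la semi-additivité de $\semiadd$. Pour chaque paire $i < j$, on choisit $x_{ij} \in \semiadd$ et $a_{ij} \in A(x_{ij})$ vérifiant $f_i(x_{ij})(a_{ij}) \neq f_j(x_{ij})(a_{ij})$ ; on pose ensuite $x := \bigoplus_{i<j} x_{ij}$ avec les injections et projections canoniques $\iota_{ij}, p_{ij}$, ainsi que $a := \sum_{i<j} A(\iota_{ij})(a_{ij}) \in A(x) \cong \bigoplus_{i<j} A(x_{ij})$. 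La naturalité fournit alors $B(p_{ij})(f_k(x)(a)) = f_k(x_{ij})(a_{ij})$, de sorte que $f_i(x)(a)$ et $f_j(x)(a)$ sont distingués par $B(p_{ij})$ pour toute paire $i<j$.

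Pour la bijectivité dans le cas où $A$ est de type fini, le cas représentable $A = h_x^\semiadd$ est immédiat par Yoneda : source et but s'identifient tous deux naturellement à $k[B(x)]$, et $\Theta_{h_x^\semiadd, B}$ est l'identité correspondante. En général, on fixe un épimorphisme $\pi : h_x^\semiadd \twoheadrightarrow A$. Le lemme 3.5 (et sa remarque qui en étend l'énoncé à $\rep$) fournit alors les deux suites exactes
\[
k[h_x^\semiadd \times_\pi h_x^\semiadd] \to k[h_x^\semiadd] \to k[A] \to 0
\]
dans $\mathcal{F}(\semiadd;k)$ et
\[
0 \to k[\textup{Hom}_{\rep}(A,B)] \to k[\textup{Hom}_{\rep}(h_x^\semiadd,B)] \to k[\textup{Hom}_{\rep}(h_x^\semiadd \times_\pi h_x^\semiadd,B)]
\]
de $k$-modules. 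En appliquant $\textup{Hom}_{\mathcal{F}(\semiadd;k)}(-, k[B])$ à la première, on obtient une suite exacte à gauche qui correspond à la seconde via les morphismes $\Theta_{-,B}$. Comme $\Theta_{h_x^\semiadd, B}$ est un isomorphisme et que $\Theta_{h_x^\semiadd \times_\pi h_x^\semiadd, B}$ est injectif par la première partie appliquée au foncteur $h_x^\semiadd \times_\pi h_x^\semiadd \in \rep$, une chasse au diagramme (variante du lemme des quatre) force $\Theta_{A,B}$ à être un isomorphisme.

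Le point le plus délicat est l'argument d'injectivité : il s'agit de condenser les témoins séparants $(x_{ij}, a_{ij})$ en un unique couple $(x,a)$, ce que permettent précisément l'additivité des foncteurs et l'existence de biproduits finis dans $\semiadd$. Une fois l'injectivité établie, la bijectivité pour $A$ de type fini est essentiellement formelle, par réduction au cas représentable via le lemme 3.5.
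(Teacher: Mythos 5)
Votre démonstration est correcte ; la partie concernant la bijectivité pour $A$ de type fini coïncide avec celle de l'article (même épimorphisme $\semiadd(x,-)\twoheadrightarrow A$, mêmes suites exactes issues du lemme 3.5 et de sa remarque, même chasse au diagramme à trois colonnes), mais votre preuve de l'injectivité suit une route réellement différente. L'article se ramène d'abord, via un épimorphisme $\bigoplus_{i\in I}\semiadd(x_i,-)\twoheadrightarrow A$, à une somme directe éventuellement infinie de représentables, identifie $k[\textup{Hom}_{Rep(\semiadd)}(\bigoplus_{i}\semiadd(x_i,-),B)]$ à $k[\prod_i B(x_i)]$, puis utilise un argument de support fini (projection sur une partie finie $J\subset I$ séparant encore les $\zeta_k$) pour conclure par le cas d'une somme finie de représentables, traité par Yoneda. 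Vous construisez au contraire directement un témoin séparant : pour $f_1,\dots,f_n$ deux à deux distincts, vous condensez des témoins $(x_{ij},a_{ij})$ en un unique couple $(x,a)$ avec $x=\bigoplus_{i<j}x_{ij}$, en exploitant que $B(p_{ij})\circ B(\iota_{i'j'})=0$ pour $(i,j)\neq(i',j')$ (additivité de $B$ appliquée au morphisme nul) et que chaque $f_k(x)$ est un morphisme de monoïdes, de sorte que $\sum_k\lambda_k[f_k(x)(a)]=0$ dans le $k$-module libre $k[B(x)]$ force l'annulation des $\lambda_k$. Votre argument est plus direct et évite le détour par les sommes infinies ; celui de l'article a l'avantage de ne reposer que sur Yoneda et une réduction formelle, sans manipulation explicite des biproduits. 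Les deux sont valides.
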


\begin{proof}
If $A$ is of the form $\semiadd(x,-)$ for some $x \in \semiadd$, then by the Yoneda lemma we have two $k$-module isomorphisms $k[\textup{Hom}_{\rep}(A,B)] \cong k[B(x)]$ and $\textup{Hom}_{\mathcal{F}(\semiadd;k)}(k[A],k[B]) \cong k[B(x)]$ fitting into the commutative diagram:
    \begin{center}
        \begin{tikzcd}
            k[\textup{Hom}_{Rep(\semiadd)}(A,B)] \arrow{r} \arrow{d}[swap]{\Theta_{A,B}} & k[B(x)] \\
            \textup{Hom}_{\mathcal{F}(\semiadd;k)}(k[A],k[B]) \arrow{ur}
        \end{tikzcd}
    \end{center}
Hence, $\Theta_{A,B}$ is an isomorphism. \\
In general, there exists a surjection $\alpha : \underset{i \in I}{\bigoplus} \: \semiadd(x_i,-) \twoheadrightarrow A$, and we obtain the following commutative diagram :
    \begin{center}
        \begin{tikzcd}
            k[\textup{Hom}_{Rep(\semiadd)}(A,B)] \arrow{r} \arrow{d} & \textup{Hom}_{\mathcal{F}(\semiadd;k)}(k[A],k[B]) \arrow{d} \\
            k[\textup{Hom}_{Rep(\semiadd)}(\underset{i \in I}{\bigoplus} \: \semiadd(x_i,-),B)] \arrow{r} & \textup{Hom}_{\mathcal{F}(\semiadd;k)}(k[\underset{i \in I}{\bigoplus} \: \semiadd(x_i,-)],k[B])
        \end{tikzcd}
    \end{center}
where both vertical arrows are injective, so it suffices to show that the bottom arrow is injective.
Let $0 \neq x \in k[\textup{Hom}_{\rep}(\underset{i \in I}{\bigoplus} \semiadd(x_i,-),B)] \cong k[\underset{i \in I}{\prod} B(x_i)]$, $x$ can be written as $\underset{k = 1}{\overset{n}{\sum}} \lambda_k [\zeta_k]$ where $n \in \mathbb{N}^*$, $\lambda_k \in k \setminus \{0\}$, $\zeta_k \in \underset{i \in I}{\prod} B(x_i)$ and the $\zeta_k$ are pairwise distinct.
Since the sum is finite, there exists a nonempty finite subset $J \subset I$ such that the projections of the $\zeta_k$ in $\underset{j \in J}{\prod} B(x_j)$ remain pairwise distinct.
Hence, the image of $x$ under $k[\textup{Hom}_{\rep}(\underset{i \in I}{\bigoplus} \semiadd(x_i,-),B)] \xrightarrow{} k[\textup{Hom}_{\rep}(\underset{j \in J}{\bigoplus} \semiadd(x_j,-),B)]$ is still nonzero.
The isomorphism $k[\textup{Hom}_{\rep}(\underset{j \in J}{\bigoplus} \semiadd(x_j,-),B)] \cong \textup{Hom}_{\mathcal{F}(\semiadd;k)}(k[\underset{j \in J}{\bigoplus} \semiadd(x_j,-)],k[B])$ then allows us to conclude. \\
If $A$ is finitely generated, there exists a surjection $\alpha : \semiadd(x,-) \twoheadrightarrow A$, and by the previous lemma we have the following commutative diagram in which the rows are exact:
    \begin{center}
        \begin{tikzcd}
            0 \arrow{r} & k[\textup{Hom}_{Rep(\semiadd)}(A,B)] \arrow{r} \arrow{d}{\Theta_{A,B}} & k[\textup{Hom}_{Rep(\semiadd)}(\semiadd(x,-),B)] \arrow{r} \arrow{d}{\Theta_{\semiadd(x,-),B}} & k[\textup{Hom}_{Rep(\semiadd)}(\semiadd(x,-) \times_{\alpha} \semiadd(x,-),B)] \arrow{d}{\Theta_{\semiadd(x,-) \times_{\alpha} \semiadd(x,-),B}} \\
            0 \arrow{r} & \textup{Hom}_{\mathcal{F}(\semiadd;k)}(k[A],k[B]) \arrow{r} & \textup{Hom}_{\mathcal{F}(\semiadd;k)}(k[\semiadd(x,-)],k[B]) \arrow{r} & \textup{Hom}_{\mathcal{F}(\semiadd;k)}(k[\semiadd(x,-) \times_{\alpha} \semiadd(x,-)],k[B])
        \end{tikzcd}
    \end{center}
From what precedes, the three vertical morphisms are injective and the middle one is moreover bijective. Hence, $\Theta_{A,B}$ is bijective.
\end{proof}

In the remainder of this article, we will compute torsion groups, and to deduce results on extension groups, we will need the following proposition :

\begin{proposition}
Let $F \in \mathcal{F}(\semiadd^{op};k)$, $G \in \mathcal{F}(\semiadd;k)$ and $M \in k-\textup{Mod}$.
If $F$ takes projective values, there exists a spectral sequence:
    \begin{center}
        $E^2_{i,j} = \textup{Ext}^j_{k}(\textup{Tor}_i^{\semiadd}(F,G),M) \Longrightarrow \textup{Ext}^{i+j}_{\mathcal{F}(\semiadd;k)}(G, \textup{Hom}_{k}(F,M))$
    \end{center}
In particular, if $\textup{Tor}_i^{\semiadd}(F,G) = 0$ for all $i > 0$, we obtain an isomorphism $\textup{Ext}^j_{k}(F \otimes_{\semiadd} G, M) \cong \textup{Ext}^{j}_{\mathcal{F}(\semiadd;k)}(G, \textup{Hom}_{k}(F,M))$ for all $j > 0$.
\end{proposition}

\begin{proof}
Let $P_\bullet \xrightarrow{} G \xrightarrow{} 0$ be a projective resolution of $G$ and $0 \xrightarrow{} M \xrightarrow{} I^{\bullet}$ an injective resolution of $M$.
We obtain a bicomplex $\textup{Hom}(F \otimes_{\semiadd} P_{i}, I^{j})$.
Filtering first with respect to $i$ and then with respect to $j$, we obtain the page $E^2_{i,j} = \textup{Ext}^j_{k}(\textup{Tor}_i^{\semiadd}(F,G),M)$.
By the adjunction isomorphism, we have an isomorphism of bicomplexes $\textup{Hom}(F \otimes_{\semiadd} P_{i}, I^{j}) \cong \textup{Hom}(P_{i}, \textup{Hom}_{k}(F,I^{j}))$.
Filtering this time with respect to $j$ and then $i$, we obtain the page $'E^2_{i,j} = \textup{Ext}^{i}(G,\textup{Hom}_{k}(F,M))$ if $j = 0$ and $'E^2_{i,j} = 0$ otherwise, since $\textup{Hom}_{k}(F,-)$ is exact.
We then obtain as the limit page $'E^{\infty} =$ $'E^2$. Since both spectral sequences converge to the same limit page, we obtain the desired spectral sequence.
\end{proof}

\section{The case of semilattices}

Let us begin by recalling the definition of a semilattice.

\begin{definition}
Let $M$ be a partially ordered set. If for all $a,b \in M$, the least upper bound $sup(a,b)$ exists, we say that $M$ is a semilattice and we denote $a \vee b := sup(a,b)$. Moreover, if for all $a,b \in M$, the greatest lower bound $inf(a,b)$ exists, we say that $M$ is a lattice and we denote $a \wedge b := inf(a,b)$.
\end{definition}

Equivalently, \textbf{a semilattice can be defined as an abelian semigroup in which every element is idempotent}. The sum of two elements is given by $\vee$, and conversely, by setting $x \leq y$ if $x + y = y$, we obtain a semilattice. This is the point of view that we will use.

Throughout this section, we will assume that for all $x,y \in \semiadd$, $\semiadd(x,y)$ \textbf{is an abelian monoid in which every element is idempotent}. This implies that additive functors as well as their tensor products take values in semilattices :

\begin{lemme}
Let $A \in \rep, B \in \repop$, then $A$ takes values in semilattices and $B \otimes_{\semiadd} A$ is a semilattice.
\end{lemme}

\begin{proof}
There exist $(a_i)_{i \in I}, (b_j)_{j \in J} \in \semiadd^{I \times J}$ and surjections $\oplus_{i \in I} \semiadd(a_i, -) \twoheadrightarrow A$, $\oplus_{j \in J} \semiadd(-,b_j) \twoheadrightarrow B$. For $x \in \semiadd$, $\oplus_{i \in I} \semiadd(a_i, x)$ is an abelian monoid in which every element is idempotent, in particular, it is a semilattice. Hence, the same holds for its quotient $A(x)$. \newline
$B \otimes_{\semiadd} A$ is a semilattice since it is a quotient of $\oplus_{(i,j) \in I \times J} \semiadd(-,b_j) \otimes_{\semiadd} \semiadd(a_i,-) \cong \oplus_{(i,j) \in I \times J} \semiadd(a_i,b_j)$, which is a semilattice by hypothesis on $\semiadd$.
\end{proof}

To show the vanishing of the torsion groups, we will work with simplicial homotopy groups that are easily computable in our case, this is the content of the following lemma. For a definition of the homotopy groups of fibrant simplicial sets, see \cite{GJ}, and more generally, they are defined using a fibrant replacement.

\begin{lemme}
Let $M_{\bullet}$ be a simplicial semilattice, then for all $e \in M_0$ and all $n \geq 1$, $\pi_n(M_{\bullet},e) = 0$.
\end{lemme}

\begin{proof}
Let $n \geq 1$, since the functor $\pi_n$ commutes with finite products, the operation on $M_{\bullet}$ induces a new operation $\pi_n(M_{\bullet},e) \times \pi_n(M_{\bullet},e) \xrightarrow{(\alpha_1, \alpha_2)} \pi_n(M_{\bullet},e^2) = \pi_n(M_{\bullet},e)$. Since $M_n$ is commutative, we have $\alpha_1 = \alpha_2 =: \alpha$, and the associativity of $M_{\bullet}$ gives the following commutative diagram :
\[
        \begin{tikzcd}
        \pi_n(M_{\bullet},e) \times \pi_n(M_{\bullet},e) \times \pi_n(M_{\bullet},e) \arrow["(id \textup{,} (\alpha \textup{,} \alpha))"]{r} \arrow["((\alpha \textup{,} \alpha) \textup{,} id)"]{d} & \pi_n(M_{\bullet},e) \times \pi_n(M_{\bullet},e) \arrow["(\alpha \textup{,} \alpha)"]{d}
        \\
        \pi_n(M_{\bullet},e) \times \pi_n(M_{\bullet},e) \arrow["(\alpha \textup{,} \alpha)"]{r} & \pi_n(M_{\bullet},e)
        \end{tikzcd}
\]
Hence, for all $x,y,z \in \pi_n(M_{\bullet},e)$, we have $\alpha(x) + \alpha(\alpha(y) + \alpha(z)) = \alpha(\alpha(x+y)) + \alpha(z)$. Taking $y = z = 0$, we get $\alpha(x) = \alpha(\alpha(x))$, i.e., $\alpha = \alpha \circ \alpha$. Denoting by $\mu$ the addition in $M_\bullet$, the following commutative diagram :
\[
        \begin{tikzcd}
        M_\bullet \arrow["(id \textup{,} id)"]{r} \arrow["id",swap]{rd} & M_\bullet \oplus M_\bullet \arrow["\mu"]{d} \\
         & M_\bullet
        \end{tikzcd}
\]
implies that $2\alpha = id$, so $\alpha$ is surjective, but from the previous relation, we have $4\alpha = 2\alpha$, hence $\alpha = 0$. Thus, $\pi_n(M_\bullet,e) = 0$.
\end{proof}

Therefore, the morphism $M_\bullet \xrightarrow{} \pi_0(M_\bullet)_\bullet$ induced by $M_0 \xrightarrow{} \pi_0(M_\bullet)$ is a weak homotopy equivalence, where $\pi_0(M_\bullet)_\bullet$ is the constant simplicial set on $\pi_0(M_\bullet)$. We immediately deduce the following proposition :

\begin{proposition}
Let $M_\bullet$ be a simplicial semilattice. Then for all $n \geq 1$, $H_n(M_{\bullet};k) = 0$.
\end{proposition}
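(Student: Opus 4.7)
The plan is to combine Lemma 4.3 with the homotopy invariance of simplicial homology. Lemma 4.3 gives $\pi_n(M_\bullet,e) = 0$ for every basepoint $e \in M_0$ and every $n \geq 1$, so the canonical map $M_\bullet \to \pi_0(M_\bullet)_\bullet$ to the constant simplicial set on the set of connected components is a weak homotopy equivalence, as the paragraph preceding the proposition already observes.

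Next I would invoke the general fact that a weak homotopy equivalence of simplicial sets $f : X_\bullet \to Y_\bullet$ induces a quasi-isomorphism $k[X_\bullet] \to k[Y_\bullet]$ between the associated Moore chain complexes, so in particular an isomorphism $H_n(X_\bullet;k) \cong H_n(Y_\bullet;k)$ for all $n$. Applied to the weak equivalence above, this gives
\begin{equation*}
H_n(M_\bullet;k) \cong H_n(\pi_0(M_\bullet)_\bullet;k) \quad \text{for all } n \geq 0.
\end{equation*}

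Finally, one computes the right-hand side by hand: for any set $S$ viewed as a constant simplicial set, the associated Moore complex has $k[S]$ in every degree and all face maps equal to the identity, so the differentials $\sum_i (-1)^i d_i$ alternate between $0$ and $\mathrm{id}$. Consequently $H_0(S_\bullet;k) = k[S]$ and $H_n(S_\bullet;k) = 0$ for $n \geq 1$, which yields $H_n(M_\bullet;k) = 0$ for $n \geq 1$ as desired.

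There is essentially no obstacle: once Lemma 4.3 is in hand the proposition is a one-line consequence. The only mild point to watch is that $\pi_n$ is defined via a fibrant replacement, but simplicial homology is invariant under such replacements, so the quasi-isomorphism comparison goes through without any change.
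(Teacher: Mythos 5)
Your argument is correct and coincides with the paper's own proof: both deduce from Lemma 4.3 that $M_\bullet \to \pi_0(M_\bullet)_\bullet$ is a weak equivalence, linearise to get a quasi-isomorphism $k[M_\bullet] \to k[\pi_0(M_\bullet)_\bullet]$, and conclude by the (trivial) homology of a constant simplicial set. The only difference is that you spell out the alternating-differential computation for the constant object, which the paper leaves implicit.
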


\begin{proof}
We know that the morphism $M_\bullet \xrightarrow{} \pi_0(M_\bullet)_\bullet$ induced by $M_0 \xrightarrow{} \pi_0(M_\bullet)$ is a weak equivalence. Hence, $k[M_\bullet] \xrightarrow{} k[\pi_0(M_\bullet)_\bullet]$ is a quasi-isomorphism. Therefore,
\begin{equation*}
        H_n(M_{\bullet};k) = \left\{
    \begin{aligned}
         & k[\pi_0(M_\bullet)] \textup{ if $n = 0$} \\
         & 0 \textup{ if $n \geq 1$} \\
    \end{aligned}
        \right.
    \end{equation*}
\end{proof}

We directly deduce the following theorem :

\begin{theoreme}
Let $A \in \repop, B \in \rep$, then for all $n \geq 1$, $\textup{Tor}_n^{\semiadd}(k[A],k[B]) = 0$.
\end{theoreme}

\begin{proof}
Let $P_\bullet \xrightarrow{\sim} B_\bullet$ be a simplicial projective resolution of $B$, then $k[P_\bullet]$ is a projective resolution of $k[B]$. Hence, $\textup{Tor}_n^{\semiadd}(k[A],k[B]) = H_n(k[A] \otimes_{\semiadd} k[P_\bullet]) \cong H_n(k[A \otimes_{\semiadd} P_\bullet])$. \newline
But by the Lemma 4.2, $A \otimes_{\semiadd} P_\bullet$ is a simplicial semilattice. \newline
Therefore, we can apply Proposition 4.4 to deduce that $\textup{Tor}_n^{\semiadd}(k[A],k[B]) = 0$ for $n \geq 1$.
\end{proof}

If $A \in \rep$, we denote $A^{\#} := \textup{Hom}_{\mon}(-,\{0,1\}) \circ A$. \newline
We have the following duality property:

\begin{proposition}
Let $A \in \rep$ taking finite values, then we have an isomorphism $k[A] \cong k^{A^{\#}}$.
\end{proposition}

\begin{proof}
By assumption, for all $x \in \semiadd$, $A(x)$ is a finite monoid in which every element is idempotent, and therefore $A(x)$ is a finite lattice (see Proposition 2.1 of \cite{Steinberg}). Thus, we have an isomorphism $k[A(x)] \cong k^{A(x)}$ (see \cite{Steinberg}, Corollary 9.5; the proof works the same way for a commutative unitary ring). Moreover, we have an isomorphism $\textup{Hom}_{\mon}(A(x),\{0,1\}) \cong A(x)$ which sends $f$ to $max(f^{-1}(1))$ (see Lemma 2.1 and Theorem 3.6 of \cite{Pirashvili}). Combining these isomorphisms, we obtain $k[A(x)] \cong k^{A^{\#}(x)}$, which sends $a$ to the evaluation at $a$, i.e., the map $\phi \mapsto \phi(a)$. In particular, this isomorphism is functorial in $x$, and we obtain an isomorphism $k[A] \cong k^{A^{\#}}$.
\end{proof}

We can now deduce the vanishing of the Ext groups.

\begin{theoreme}
Let $A,B \in \rep$ such that $B$ takes finite values. Then, for every $n \geq 1$, we have $\textup{Ext}^n_{\mathcal{F}(\semiadd;k)}(k[A],k[B]) = 0$.
\end{theoreme}

\begin{proof}
If $B$ takes finite values, then by Proposition 4.6, we have isomorphisms $k[B] \cong k^{B^{\#}} \cong k[B^{\#}]^{\vee}$ and $\textup{Ext}^n_{\mathcal{F}(\semiadd;k)}(k[A],k[B]) \cong \textup{Ext}^n_{\mathcal{F}(\semiadd;k)}(k[A],k[B^{\#}]^{\vee})$. We can also apply Proposition 3.7, and since $\textup{Tor}_{n}^{\semiadd}(k[B^{\#}],k[A]) = 0$ for all $n \geq 1$, we have isomorphisms \newline $\textup{Ext}^n_{\mathcal{F}(\semiadd;k)}(k[A],k[B^{\#}]^{\vee}) \cong \textup{Ext}^n_{k}(k[B^{\#} \otimes_{\semiadd} A],k) = 0$ for all $n \geq 1$ because $k[B^{\#} \otimes_{\semiadd} A]$ is projective.
\end{proof}

\begin{remarque}
If the category $\mathcal{F}(\semiadd;k)$ is locally noetherian and if $A$ is finitely generated, the result still holds without any assumption on $B$. Indeed, it suffices to write $B$ as a filtered colimit of its finitely generated subfunctors and use the fact that $\textup{Ext}^n_{\mathcal{F}(\semiadd;k)}(A,-)$ commutes with filtered colimits. This is, for instance, the case for the category of generalized correspondence functors, whose definition is recalled in Section 6.
\end{remarque}

\section{The $k$-trivial case}

In this section, we work in a more general setting than that of semilattices, namely that of $k$-trivial categories, which we will now define. Similarly to the previous section, we will reduce the computation of torsion groups to the computation of the homology of a simplicial inverse monoid.

\begin{definition}
Let $M \in \textup{\mon}$, if $M$ is finite and regular, i.e. $\forall m \in M$, $\exists n \in M$ such that $m = m+n+m$, and for every idempotent $e \in M$, $(e + M)^{\times} \otimes_{\mathbb{Z}} k = 0$, we say that $M$ is $k$-trivial.
\end{definition}

\begin{definition}
Let $\mathcal{C}$ be a semi-additive category.
We say that $\mathcal{C}$ is $k$-trivial if $\forall x,y \in \mathcal{C}$, the monoid $\mathcal{C}(x,y)$ is $k$-trivial.
\end{definition}

\begin{remarque}
If $M \in \mon$ is a semilattice, then $M$ is regular and $\forall e \in E(M)$, $(e + M)^{\times} = 0$, hence $M$ is $k$-trivial. In particular, the category of correspondences is $k$-trivial.
\end{remarque}

\begin{remarque}
The conditions on the $G_e$ are necessary to show the Lemma 5.5, without these conditions, if $M$ is $k$-trivial and $k$ is a field, the algebra $k[M]$ is no longer split semisimple and we need this to deduce the Theorem 5.13 from the Theorem 5.11.
\end{remarque}

\begin{definition}
Let $S$ be a semigroup, we say that $S$ is inverse if, for every $x \in S$, there exists a unique element $x^*$ in $S$ such that $xx^*x = x$ and $x^*xx^* = x^*$.
\end{definition}

For the sequel, we will need the following lemma:

\begin{lemme}
Let $M$ be a monoid, then $M$ is inverse if and only if $M$ is regular and its idempotents commute with each other. In particular, if $M$ is $k$-trivial, then $M$ is inverse.
\end{lemme}

\begin{proof}
See Theorem 3.2 of \cite{Steinberg}.
\end{proof}

For a study of finite inverse monoids, see Chapter 3 of \cite{Steinberg}.
An important point is that if $M$ is a finite inverse monoid, the monoid algebra $k[M]$ is isomorphic to the algebra of a groupoid (see Section 8.1 of \cite{Steinberg} for the construction of the algebra of a category).
More precisely, let $G(M)$ be the groupoid whose set of objects is $E(M)$ and such that for $e,f \in E(M)$, $\textup{Hom}_{G(M)}(e,f) = \{ m \in M | m^{*}m = e, mm^{*} = f \}$ (we do not assume $M$ commutative).
There is an isomorphism of $k$-algebras $\alpha : k[M] \xrightarrow{} k[G(M)]$ (see \cite{Steinberg} theorem 9.3 for the construction of $\alpha$).
If $M$ is moreover commutative, the groupoid $G(M)$ is isomorphic to $\underset{e \in E(M)}{\coprod} G_e$, where $G_e = (e + M)^{\times}$, viewed here as a single-object groupoid.
Hence, we obtain an isomorphism of $k$-algebras $k[M] \xrightarrow{\alpha} k[G(M)] \xrightarrow{\sim} \underset{e \in E(M)}{\prod} k[G_e]$.
If $M \in \mon$, we denote $M^* := \textup{Hom}_{\mon}(M,k_{\mu})$, where $k_{\mu}$ denotes the multiplicative monoid underlying $k$.

\begin{lemme}
Let $M$ be a $k$-trivial monoid, and suppose that $k$ contains all roots of unity. Then the evaluation morphism $ev : k[M] \xrightarrow{} k^{M^{*}}$ sending $[m]$ to evaluation at $m$ is an isomorphism.
\end{lemme}

\begin{proof}
Suppose first that $k$ is a field.
By hypothesis, each $G_e = (e + M)^{\times}$ is a finite abelian group whose order is invertible in $k$.
Thus, the morphism $ev_e : k[G_e] \xrightarrow{} k^{\textup{Hom}_{\mathbb{Z}}(G_e,k^{\times})}$ sending $[g]$ to evaluation at $g$ is an isomorphism.
It is straightforward to check that the following diagram is commutative:
\[
    \begin{tikzcd}[]
        k[M] \arrow[r,"ev"] \arrow[d] & k^{M^*} \arrow[d] \\
        \underset{e \in E(M)}{\prod} k[G_e] \arrow[r,"\prod ev_{e}"] & \underset{e \in E(M)}{\prod} k^{\textup{Hom}_{\mathbb{Z}}(G_e,k^{\times})}
    \end{tikzcd}
\]
where the left vertical arrow is just the composition $k[M] \xrightarrow{\alpha} k[G(M)] \xrightarrow{\sim} \underset{e \in E(M)}{\prod} k[G_e]$ and the right vertical arrow is the composition \newline $k^{M^{*}} \cong \textup{Hom}_{Set}(\textup{Hom}_{k-alg}(k[M],k),k) \cong \textup{Hom}_{Set}(\textup{Hom}_{k-alg}(\underset{e \in E(M)}{\prod} k[G_e],k),k) \cong \underset{e \in E(M)}{\prod} k^{\textup{Hom}_{\mathbb{Z}}(G_e,k^{\times})}$ and we deduce that $ev$ is also an isomorphism.
\newline
If $k$ is not a field, then for every morphism of rings $k \xrightarrow{} K$ with $K$ a field, $ev \otimes_{k} K$ is an isomorphism.
Hence we may apply the following lemma to conclude.
\end{proof}

\begin{lemme}
Let $f : V \xrightarrow{} W$ be a morphism of $k$-modules, where $V$ and $W$ are finitely generated projective.
If for every morphism of rings $k \xrightarrow{} K$ with $K$ a field, the morphism $f \otimes_{k} K$ is an isomorphism, then $f$ itself is an isomorphism.
\end{lemme}

\begin{proof}
The cokernel $Coker f$ is finitely generated.
If $Coker f \neq 0$, then there exists a field $K$ and a surjective morphism of $k$-modules $Coker f \twoheadrightarrow K$.
By hypothesis, $f \otimes_{k} K$ is an isomorphism, hence $0 = Coker f \otimes_{k} K \twoheadrightarrow K \otimes_{k} K \neq 0$ which is a contradiction.
Therefore, $Coker f = 0$, and we have a short exact sequence $0 \xrightarrow{} Ker f \xrightarrow{} V \xrightarrow{} W \xrightarrow{} 0$.
Since $W$ is projective, the sequence splits, and $Ker f$ is finitely generated.
Applying the same argument as for $Coker f$, we deduce $Ker f = 0$, so $f$ is an isomorphism.
\end{proof}

If $A \in \rep$, we define similarly $A^* := \textup{Hom}_{\mon}(-,k_{\mu}) \circ A$.
Since the evaluation morphism is functorial, we immediately obtain the following duality property for additive functors :

\begin{proposition}
Let $A \in \rep$ taking finite values, and suppose that $\semiadd$ is $k$-trivial and that $k$ contains all roots of unity.
Then there is an isomorphism $k[A] \cong k^{A^*}$.
\end{proposition}

\begin{proof}
Thanks to lemma 5.5, we only need to check that $A$ takes values in $k$-trivial monoids.
There exists $(a_i)_{i \in I} \in \semiadd^{I}$ and a surjection $\pi : \oplus_{i \in I} \semiadd(a_i,-) \twoheadrightarrow A$.
Since $\semiadd$ is $k$-trivial, for each $i$, $\semiadd(a_i,-)$ takes values in $k$-trivial monoids.
Let $x \in \semiadd$ and $e \in A(x)$ be an idempotent.
There exists an idempotent $(f_i)_{i \in I} \in \oplus_{i \in I} \semiadd(a_i,x)$ mapping to $e$, and $\pi$ induces surjections $((f_i)_i + \oplus_{i \in I} \semiadd(a_i,x))^{\times} \twoheadrightarrow{} (e + A(x))^{\times}$ and $0 = (\oplus_{i \in I} (f_i + \semiadd(a_i,x))^{\times}) \otimes_{\mathbb{Z}} k \cong ((f_i)_i + \oplus_{i \in I} \semiadd(a_i,x))^{\times} \otimes_{\mathbb{Z}} k \twoheadrightarrow (e + A(x))^{\times} \otimes_{\mathbb{Z}} k$.
Hence $A(x)$ is $k$-trivial.
\end{proof}

Following the same spirit as in the previous section, we will work with simplicial inverse semigroups.
If $S_\bullet$ is a simplicial semigroup, its multiplication induces a new operation on its homotopy groups.
In general, this does not coincide with the usual operation on homotopy groups, but if we assume that $S_\bullet$ is both inverse and commutative, then the two coincide, it is the object of the following proposition.

\begin{proposition}
Let $S_\bullet$ be a non-empty commutative simplicial inverse semigroup, $e \in S_0$ an idempotent, and $n \in \mathbb{N}^*$.
Then the induced operation on $\pi_n(S_\bullet,e)$ is the same as its usual one.
\end{proposition}

\begin{proof}
The operation of $S_\bullet$ induces a new operation on homotopy groups,
\newline $\pi_n(S_\bullet,e) \times \pi_n(S_\bullet,e) \xrightarrow{(\gamma,\gamma)} \pi_n(S_\bullet,e)$,
where both components are given by the same morphism $\gamma$, since $S_\bullet$ is commutative.
The associativity of $S_\bullet$ gives the following commutative diagram :
    \[
        \begin{tikzcd}[]
        \pi_n(S_\bullet,e) \times \pi_n(S_\bullet,e) \times \pi_n(S_\bullet,e) \arrow[r,"((\gamma \text{,} \gamma) \text{,} id)"] \arrow[d, "(id \text{,} (\gamma \text{,} \gamma))"] & \pi_n(S_\bullet,e) \times \pi_n(S_\bullet,e) \arrow[d, "(\gamma \text{,} \gamma)"] \\
        \pi_n(S_\bullet,e) \times \pi_n(S_\bullet,e) \arrow[r, "(\gamma \text{,} \gamma)"] & \pi_n(S_\bullet,e)
        \end{tikzcd}
    \]
Hence $\gamma^2 = \gamma$.
Moreover, the morphism $S_\bullet \xrightarrow{} S_\bullet \times S_\bullet$ sending $a$ to $(a,a^* + a)$ is a section of $S_\bullet \times S_\bullet \xrightarrow{+} S_\bullet$.
Thus, $(\gamma, \gamma)$ is an epimorphism, and so is $\gamma$. Therefore, $\gamma = id$.
\end{proof}

\begin{remarque}
We need to assume that $S_\bullet$ is commutative and inverse, otherwise, the result may be false.
Indeed, if $n \in \mathbb{N}^*$ and $V$ is an abelian group, we denote by $K(V,n)$ the $n$-th Eilenberg–Mac Lane space associated to $V$.
The functor $K(-,n)$ commutes with finite products, and there is an isomorphism $\pi_n(K(-,n), *) \cong Id$ between endofunctors of the category of abelian groups.
Therefore, any new operation on $V$ induces one on $\pi_n(K(V,n), *)$, and the above result fails.
For instance, one may consider the product $a.b = a$, which turns $V$ into a semigroup.
\end{remarque}

We then deduce the following proposition:

\begin{proposition}
Let $S_\bullet$ be a connected commutative simplicial inverse semigroup.
Then, for all $n \geq 1$, there is an isomorphism $H_n(S_\bullet;k) \cong H_n(S^+_\bullet;k)$.
\end{proposition}

\begin{proof}
For each integer $n$, set $\Tilde{S}_n = S_n \bigsqcup \{1\}$, the monoid obtained by adjoining a unit to $S_n$.
Then $\Tilde{S}_{\bullet}$ is a commutative simplicial monoid, and we have a morphism of simplicial sets $S_{\bullet} \xrightarrow{} \Tilde{S}_{\bullet}$.
We have the following commutative square, for all $n \geq 0$ :
\[
        \begin{tikzcd}[]
        H_n(S_{\bullet};k) \arrow{r} \arrow{d} & H_n(\Tilde{S}_{\bullet};k) \arrow{d} \\
        H_n(S_{\bullet}^+;k) \arrow{r} & H_n(\Tilde{S}_{\bullet}^+;k)
        \end{tikzcd}
\]
where the vertical maps and the bottom horizontal map come from group completions.
The morphism $S_{\bullet}^+ \xrightarrow{} \Tilde{S}_{\bullet}^+$ is an isomorphism of simplicial abelian groups, hence the bottom map is an isomorphism for all $n \geq 0$.
By construction of $\Tilde{S}_{\bullet}$, we have $H_*(\Tilde{S}_{\bullet};k) \cong H_*(S_{\bullet};k) \oplus k[0]$, and thus the top horizontal map is an isomorphism for all $n \geq 1$.
It remains to show that $H_n(\Tilde{S}_{\bullet};k) \xrightarrow{} H_n(\Tilde{S}_{\bullet}^+;k)$ is an isomorphism for $n \geq 1$.
Since $\Tilde{S}$ is commutative, we may apply the group completion theorem (see Theorem Q.4 \cite{Quillen2}), yielding an isomorphism
$H_*(\Tilde{S}_{\bullet};k)[\pi_0(\Tilde{S}_{\bullet})^{-1}] \cong H_*((\Tilde{S}_{\bullet})^+;k)$.
By hypothesis, $\pi_0(\Tilde{S}_{\bullet}) = \{[1],[e]\}$, where $e \in S_0$ is an idempotent.
Addition by $e$ is given in degree $n$ by the following composition :
\begin{align*}
  S_n &\longrightarrow S_n \times S_n \longrightarrow S_n\\
  a &\longmapsto \bigl(a, s_0 \circ \cdots \circ s_n(e)\bigr)
      \longmapsto a + s_0 \circ \cdots \circ s_n(e)
\end{align*}
By the Propososition 5.8, since $s_0 \circ \cdots \circ s_n(e)$ is null-homotopic, the addition by $e$ induces the identity on $\pi_n$. Thus, addition by $e$ induces an isomorphism in homotopy and therefore also an isomorphism on $H_n(S_\bullet;k)$. Moreover, $H_n(S_\bullet;k) \cong H_n(\Tilde{S}_\bullet;k)$, for all $n \geq 1$, and hence it also induces an isomorphism on $H_n(\Tilde{S}_\bullet;k)$ for $n \geq 1$. Consequently, $H_n(\Tilde{S}_\bullet;k) \cong H_n(\Tilde{S}_{\bullet};k)[\pi_0(\Tilde{S}_{\bullet})^{-1}] \cong H_n(\Tilde{S}_{\bullet}^+;k)$, for all $n \geq 1$.
\end{proof}

We will apply this proposition to deduce the vanishing of the homology of simplicial inverse semigroups. Before that, we need to introduce a new notation. We recall that for a semigroup $M$, the period of an element $x \in M$ is the smallest integer $d > 0$ so that there exists $N \in \mathbb{N}$ such that $\forall n \geq N$, $x^{n + d} = x^{n}$ (see section 1.2 of \cite{Steinberg}). Let $\mathcal{C}$ be the set of commutative semigroups $M$ such that $\forall x \in M$, $x$ is of finite period and its period is coprime to every prime number non-invertible in $k$. It is clear that $\mathcal{C}$ is stable under subsemigroups, direct sums and that if $M \xrightarrow{} N$ is a surjective morphism of semigroups and $M \in \mathcal{C}$ then $N \in \mathcal{C}$.

\begin{remarque}
    From the definition, it is clear that $\mathcal{C}$ contains all k-trivial monoids.
\end{remarque}

We have the following proposition.

\begin{proposition}
If $M_{\bullet}$ is a commutative inverse simplicial monoid such that for all $n \geq 0$, $M_n \in \mathcal{C}$, then $H_n(M_\bullet;k) = 0$ for all $n \geq 1$.
\end{proposition}

\begin{proof}
By decomposing $M_{\bullet}$ as the disjoint union of its connected components, we obtain the following isomorphism:
\begin{equation*}
        H_*(M_{\bullet};k) \cong \oplus_{[x] \in \pi_0(M_{\bullet})} H_*(M_{\bullet}^x;k)
\end{equation*}
where $x \in M_0$ is a representative of $[x]$ and $M_{\bullet}^x$ is the connected component of $x$. Moreover, we have the following commutative diagram,
\newline \begin{tikzcd}
    M_{\bullet}^{x+x^*} \ar[r,"+x"]\ar[rr,out=-30,in=210,swap,"+(x^{*}+x)"] & M_{\bullet}^{x} \ar[r,"+x^*"] & M_{\bullet}^{x+x^*}
    \end{tikzcd}
    and \begin{tikzcd}
    M_{\bullet}^{x} \ar[r,"+x^*"]\ar[rr,out=-30,in=210,swap,"+(x^*+x)"] & M_{\bullet}^{x+x^*} \ar[r,"+x"] & M_{\bullet}^{x}
    \end{tikzcd}.
Let $e := x^*+x$, since $e$ is an idempotent, by Proposition 5.8, the first composite induces the identity in homotopy and thus also in homology. We also have the following diagram (which is not commutative), where $\mu$ denotes the sum :
    \[
        \begin{tikzcd}[]
        M_{\bullet}^{x} \arrow[r,"t \mapsto (t \text{,} e)"] \arrow[dr, swap, "t \mapsto (t \text{,} t+t^*)"] & M_{\bullet}^{x} \times E(M_{\bullet}^{e}) \arrow[r,"\mu"] & M_{\bullet}^{x} \\
        & M_{\bullet}^{x} \times E(M_{\bullet}^{e}) \arrow[ur,"\mu"] &
        \end{tikzcd}
    \]
Since $E(M_{\bullet}^{e})$ is a connected simplicial semilattice, it is contractible. We deduce that the two composites are homotopic, but the first one equals $+e$ and the second equals the identity. Hence, $+e$ induces the identity on $H_*(M_{\bullet}^{x};k)$.
Therefore, we have $H_*(M_{\bullet}^x;k) \cong H_*(M_{\bullet}^{e};k)$. By Proposition 5.9, we also have $H_*(M_{\bullet}^{e};k) \cong H_*((M_{\bullet}^{e})^+;k)$. 
Now, since $M_n \in \mathcal{C}$, $(M_n^{e})^{+}$ is a torsion abelian group such that, for every prime number $p$ such that $(M_n^{e})^{+}$ has $p$-torsion, $p \in k^{\times}$. By decomposing $(M_n^{e})^{+}$ as the direct sum of its $p$-primary components, we see that this implies, $(M_n^{e})^{+} \otimes_{\mathbb{Z}} k = 0$ and $\textup{Tor}_1^{\mathbb{Z}}((M_n^{e})^{+},k) = 0$. It follows that $H_n((M_{\bullet}^{e})^{+};k) = 0$ for all $n \geq 1$ (see Corollary 3.6 of \cite{DT}). Finally, $H_n(M_{\bullet};k) = 0$ for all $n \geq 1$.
\end{proof}

We can now prove the following theorem.

\begin{theoreme}
Suppose that $\semiadd$ is $k$-trivial and let $A \in \repop$ and $B \in \rep$. Then for all $n \geq 1$, $\textup{Tor}_n^{\semiadd}(k[A],k[B]) = 0$.
\end{theoreme}

\begin{proof}
Let $P_{\bullet} \xrightarrow{\sim} B_{\bullet}$ be a simplicial projective resolution of $B$, then $\textup{Tor}_n^{\semiadd}(k[A],k[B]) \cong H_n(k[A \otimes_{\semiadd} P_{\bullet}])$. It remains to show that the simplicial monoid $A \otimes_{\semiadd} P_{\bullet}$ satisfies the assumptions of Proposition 5.10, which is done in the following lemma.
\end{proof}

\begin{lemme}
    Suppose $\semiadd$ is $k$-trivial and let $A \in \repop$, $B \in \rep$, then $A \otimes_{\semiadd} B$ is a commutative inverse monoid in $\mathcal{C}$.
\end{lemme}

\begin{proof}
    There exist $(a_i)_{i \in I}, (b_j)_{j \in J} \in \semiadd^{I \times J}$ and surjections $\oplus_{i \in I} \semiadd(-, a_i) \twoheadrightarrow A$, $\oplus_{j \in J} \semiadd(b_j,-) \twoheadrightarrow B$. We get a surjection $\oplus_{(i,j) \in I \times J} \semiadd(b_j,a_i) \twoheadrightarrow A \otimes_{\semiadd} B$. It suffices to show that $\forall i,j$, $\semiadd(b_j,a_i)$ is a commutative inverse monoid in $\mathcal{C}$. By lemma 5.4, we know that $\semiadd(b_j,a_i)$ is inverse and from the definition of $k$-trivial monoids, it is immediate that $\semiadd(b_j,a_i)$ is in $\mathcal{C}$.
\end{proof}

We thus deduce a vanishing result for extension groups.

\begin{theoreme}
Suppose that $\semiadd$ is $k$-trivial and let $A,B \in \rep$ such that $B$ takes finite values. Then, for all integers $n \geq 1$, we have $\textup{Ext}^n_{\mathcal{F}(\semiadd;k)}(k[A],k[B]) = 0$.
\end{theoreme}

\begin{proof}
Suppose that $k$ contains all roots of unity, so that we can apply Proposition 5.7 to obtain isomorphisms $k[B] \cong k^{B^{*}} \cong k[B^{*}]^{\vee}$ and $\textup{Ext}^n_{\mathcal{F}(\semiadd;k)}(k[A],k[B]) \cong \textup{Ext}^n_{\mathcal{F}(\semiadd;k)}(k[A],k[B^{*}]^{\vee})$. We can also apply Proposition 3.7, and since $\textup{Tor}_{n}^{\semiadd}(k[B^{*}],k[A]) = 0$ for all $n > 0$, we have isomorphisms $\textup{Ext}^n_{\mathcal{F}(\semiadd;k)}(k[A],k[B^{*}]^{\vee}) \cong \textup{Ext}^n_{k}(k[B^{*} \otimes_{\semiadd} A],k) = 0$ for all $n > 0$ because $k[B^{*} \otimes_{\semiadd} A]$ is projective. \newline
In the general case, let $\mathbb{Z}^{cycl}$ be the subring of $\mathbb{C}$ generated by all the roots of unity and define \newline $K := k \otimes_{\mathbb{Z}} \mathbb{Z}^{cycl}$, $K$ contains all the roots of unity. Since $\mathbb{Z}^{cycl}$ is a free $\mathbb{Z}$-module, the morphism $k \xrightarrow{} K$ is split. Since, by Proposition 3.3, the linearization of additive functors admits as a projective resolution the linearization of projective functors, the adjunction isomorphism \newline $\textup{Hom}_{\mathcal{F}(\semiadd;K)}(\mathbb{Z}[A] \otimes_{\mathbb{Z}} K, K[B]) \cong \textup{Hom}_{\mathcal{F}(\semiadd;\mathbb{Z})}(\mathbb{Z}[A], K[B])$ derives to an isomorphism \newline $\textup{Ext}_{\mathcal{F}(\semiadd;K)}^{n}(K[A], K[B]) \cong \textup{Ext}_{\mathcal{F}(\semiadd;\mathbb{Z})}^{n}(\mathbb{Z}[A], K[B])$. The same isomorphism holds with $k$ in place of $K$. Since $k \xrightarrow{} K$ is split, $\textup{Ext}_{\mathcal{F}(\semiadd;\mathbb{Z})}^{n}(\mathbb{Z}[A], k[B])$ is a direct summand of $\textup{Ext}_{\mathcal{F}(\semiadd;\mathbb{Z})}^{n}(\mathbb{Z}[A], K[B])$, which vanishes. Therefore, $\textup{Ext}_{\mathcal{F}(\semiadd;k)}^{n}(k[A], k[B]) = 0$.
\end{proof}

\section{Generalized correspondence functors}

In this section, we will apply the previous results to the category of generalized correspondence functors defined by C. Guillaume \cite{Guillaume}, continuing the work of Bouc–Thévenaz on the category of correspondence functors \cite{BT1}.

\begin{definition}
Let $T$ be a finite distributive lattice. We denote by $\mathcal{C}_T$ the category of generalized correspondences with values in $T$, whose objects are finite sets, and if $X,Y$ are two finite sets, $\textup{Hom}_{\mathcal{C}_T}(X,Y) = T^{X \times Y}$. The composition of $S \in \textup{Hom}_{\mathcal{C}_T}(X,Y)$ and $R \in \textup{Hom}_{\mathcal{C}_T}(Y,Z)$ is given by: $RS(z,x) = \underset{y \in Y}{\bigvee}S(z,y) \wedge R(y,x)$.
\end{definition}

\begin{lemme}
$\mathcal{C}_T$ is a semi-additive category, and if $X,Y \in \mathcal{C}_T$, $\textup{Hom}_{\mathcal{C}_T}(X,Y)$ is a finite lattice.
\end{lemme}

\begin{proof}
It is immediate that the product and coproduct of two objects in $\mathcal{C}_T$ are given by the disjoint union, and that the zero object of $\mathcal{C}_T$ is the empty set.
Let $S \in \textup{Hom}_{\mathcal{C}_T}(X,Y)$ be a morphism, $S+S$ is defined as the following composite:
\[
        \begin{tikzcd}[]
        X \arrow[r, "(id_X \text{,} id_X)"] & X \oplus X \arrow[r, "(S \text{,} S)"] & Y \oplus Y \arrow[r, "(id_Y \text{,} id_Y)"] & Y
        \end{tikzcd}
    \]
We have:
\begin{center}
    $(S,S)(id_X,id_X)(x,y) = \underset{x' \in X \oplus X}{\bigvee} (S,S)(x',y) \wedge (id_X,id_X)(x,x')
    = (S,S)(x,y) = S(x,y)$
\end{center}
And therefore:
\begin{center}
    $(id_Y,id_Y)(S,S)(id_X,id_X)(x,y) = \underset{y' \in Y \oplus Y}{\bigvee} (id_Y,id_Y)(y',y) \wedge (S,S)(id_X,id_X)(x,y')
    = (S,S)(id_X,id_X)(x,y) = S(x,y)$
\end{center}
Hence, $S+S = S$, and every morphism is idempotent. Thus, $\textup{Hom}_{\mathcal{C}_T}(X,Y)$ is a finite lattice.
\end{proof}

Let $U$ be a $T$-module (see Section 4 of $\cite{CG}$), and denote by $F_U : \mathcal{C}_T \xrightarrow{} k\textup{-Mod}$ the functor that sends $X \in \mathcal{C}_T$ to $k[U^X]$.

If $A \in Rep(\mathcal{C}_T)$, $A$ is determined by its value on a singleton, and we have an equivalence of categories between $T$-modules and $Rep(\mathcal{C}_T)$, for a $T$-module $U$, we denote by $\Tilde{U}$ the associated additive functor in $Rep(\mathcal{C}_T)$. Via this equivalence of categories, the functor $F_U$, for a $T$-module $U$, corresponds to the linearization $k[\Tilde{U}]$.
Theorem 4.7 therefore gives us the following corollary:

\begin{corollaire}
Let $U,V$ be two $T$-modules with $V$ finite. For all $n \geq 1$, we have $\textup{Ext}^n_{\mathcal{F}(\mathcal{C}_T;k)}(F_U,F_V) = 0$.
\end{corollaire}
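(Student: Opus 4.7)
Le plan est d'appliquer le Théorème 4.6 via l'identification $F_U \cong k[\Tilde{U}]$ et $F_V \cong k[\Tilde{V}]$ fournie par l'équivalence de catégories entre les $T$-modules et $Rep(\mathcal{C}_T)$ rappelée juste avant l'énoncé. Il s'agit alors de vérifier que $\mathcal{C}_T$ satisfait l'hypothèse globale de la section 4 et que $\Tilde{V}$ prend ses valeurs dans les treillis finis.

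Pour la première vérification, il suffit de constater que $\textup{Hom}_{\mathcal{C}_T}(X,Y)$ est un monoïde commutatif dont tout élément est idempotent, ce qui est exactement le contenu du Lemme 6.2 (qui affirme même qu'il s'agit d'un treillis fini). La section 4 s'applique donc à $\semiadd = \mathcal{C}_T$. Pour la seconde, on remarque que $\Tilde{V}(X) \cong V^X$ est un treillis fini dès que le $T$-module $V$ est fini ; le Théorème 4.6 donne alors directement l'annulation $\textup{Ext}^n_{\mathcal{F}(\mathcal{C}_T;k)}(k[\Tilde{U}],k[\Tilde{V}]) = 0$ pour tout $n \geq 1$. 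Dans le cas général où $V$ n'est pas nécessairement fini, on invoquerait la Remarque 4.7 : la catégorie $\mathcal{F}(\mathcal{C}_T;k)$ est localement noethérienne d'après les travaux de Bouc-Thévenaz, on écrit $\Tilde{V}$ comme colimite filtrante de ses sous-foncteurs à valeurs finies, et l'on utilise que $\textup{Ext}^n(k[\Tilde{U}],-)$ commute alors aux colimites filtrantes pour se ramener au cas fini.

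L'obstacle principal n'en est pas vraiment un, puisque tout le travail technique a déjà été effectué dans les sections 4 et 6 : la preuve du corollaire se réduit essentiellement à une application du Théorème 4.6 après identification des foncteurs $F_U, F_V$ avec les linéarisations correspondantes. La seule étape demandant un soin particulier est le décryptage des identifications entre les $T$-modules et les foncteurs additifs de $\mathcal{C}_T$ vers $\mon$, en particulier le fait que $\Tilde{V}(X) \cong V^X$ hérite bien d'une structure de treillis à partir de celle du $T$-module $V$. Le recours éventuel à la Remarque 4.7 pour le cas non fini demande par ailleurs la propriété localement noethérienne de $\mathcal{F}(\mathcal{C}_T;k)$, admise dans le texte.
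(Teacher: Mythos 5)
Votre démonstration est correcte et suit essentiellement la même démarche que l'article, qui déduit le corollaire du théorème 4.6 via l'identification $F_V \cong k[\Tilde{V}]$ fournie par l'équivalence entre $T$-modules et $Rep(\mathcal{C}_T)$, le lemme 6.2 garantissant que $\mathcal{C}_T$ vérifie l'hypothèse globale de la section 4. Notez seulement que, $T$ étant un treillis distributif fini, les $T$-modules considérés ici sont finis, de sorte que $\Tilde{V}(X) \cong V^X$ est automatiquement un treillis fini et que le détour par la remarque 4.7 (et la noethérianité locale de $\mathcal{F}(\mathcal{C}_T;k)$) est superflu.
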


This result for correspondence functors (generalized or not) was not previously known. Only a few computations of $\textup{Ext}^1$ between correspondence functors (see Section 8 of \cite{BT4}) were done by Bouc and Thévenaz, using purely representation-theoretic methods as well as stabilization results for Ext.

\bibliographystyle{plain}

\nocite{*}
\bibliography{bibliographie}

\end{document}